\newtheorem{thm}{Theorem}[section]
\newtheorem{cor}[thm]{Corollary}
\newtheorem{lemma}[thm]{Lemma}
\newtheorem{propo}[thm]{Proposition}
\newtheorem*{propo*}{Proposition}
\newtheorem*{mainthm*}{Main Theorem}
\newtheorem*{funthm}{First Fundamental Theorem of Sp}
\newtheorem*{funthm2}{Second Fundamental Theorem of Sp}
\theoremstyle{definition}
\newtheorem{defin}[thm]{Definition}
\newtheorem*{conj}{Conjecture}
\numberwithin{equation}{section}
\def\F{\mathcal{F}}
\def\RR{\mathbb{R}}
\def\R2n{\mathbb{R}^{2n}}
\def\Z2{\mathbb{Z}/2 \mathbb{Z}}
\def\T{\mathcal{T}}
\def\dd{\ensuremath{\mathrm{d}}}
\def\NN{\mathbb{N}}
\def\ZZ{\mathbb{Z}}
\def\qed{\hfill $\square$}
\def\Auts0{\protect \mathrm{Aut}(s_0)}
\def\Sp{{\rm Sp}}
\def\Spp{{\rm Sp}(2n,\RR)}
\begin{document}

\baselineskip=17pt

\title[Dimensional curvature identities in Fedosov geometry]{Dimensional curvature identities in Fedosov geometry}

\author[A. Gordillo-Merino]{{Adri\'{a}n}~Gordillo-Merino}
\address{Departamento de Didáctica de las Ciencias Experimentales y Matemáticas \\ Universidad de Extremadura \\ E-06071 Badajoz, Spain}
\email{adgormer@unex.es \\ ORCID code: 0000-0002-2383-5292}\thanks{The first author has been partially supported by Junta de Extremadura and FEDER funds with the project GR21093. The second and third authors have been partially supported by Junta de Extremadura and FEDER funds with the project  GR21055, and by project PID2022-142024NB-I00. The third author also has been partially supported by project PID2019-108936GB-C21. and PID2022-142024NB-I00}

\author[R. Mart\'inez-Boh\'orquez]{Ra\'ul~Mart\'inez-Boh\'orquez}
\address{Departamento de Matem\'{a}ticas \\ Universidad de Extremadura \\ E-06071 Badajoz, Spain}
\email{raulmb@unex.es}

\author[J. Navarro-Garmendia]{Jos\'{e}~Navarro-Garmendia}
\address{Departamento de Matem\'{a}ticas \\ Universidad de Extremadura \\ E-06071 Badajoz, Spain}
\email{navarrogarmendia@unex.es}

\begin{abstract} 
The curvature tensor of a symplectic connection, as well as its covariant derivatives, satisfy certain identities that hold on any manifold of dimension less than or equal to a fixed $n$.

In this paper, we prove certain results regarding these curvature identities. Our main result  describes, for any fixed dimension and any even number $p$ of indices, the first space (provided we have filtered the identities by a homogeneity condition) of $p$-covariant curvature identities.

To this end, we use recent results on the theory of natural operations on Fedosov manifolds. These results allow us to apply the invariant theory of the symplectic group, with a method that is analogous to that used in Riemannian or Kahler geometry.

\end{abstract}

\subjclass[2020]{Primary: 53A55; Secondary: 58A32}

\keywords{Dimensional identities, Natural operations, Fedosov manifolds}

\date{\today}
\maketitle


\section{Introduction}

Fedosov manifolds constitute the skew-symmetric version of Riemannian manifolds: they are smooth manifolds $X$ equipped with a symplectic form and a symplectic connection; that is to say, with a non-singular closed 2-form $\omega$ and with a symmetric linear connection $\nabla$ such that $\nabla \omega = 0$ (\cite{FEDOSOV_1}, \cite{GELFAND}). The symplectic connection produces many local invariants, and hence the local geometry of these manifolds resembles that of Riemannian manifolds, much more than that of symplectic manifolds.

One of these local invariants is the curvature of the symplectic connection. This tensor, as well as its covariant derivatives, satisfy certain symmetries and relations among them, such as the linear and differential Bianchi identities or the Ricci identities. These are satisfied by the curvature of {\it any} symmetric linear connection on {\it any} manifold. Indeed, it can be proved that, essentially, these are the only identities with these properties (\cite[Chap. VI]{KMSBOOK}). 

Nevertheless, there exists some other kind of relations, that crucially depend on the dimension of the manifold. As an example, consider a Fedosov manifold $X$ with symplectic form $\omega_{ab}$, cuvature $R_{\ ijk}^l$ and Ricci tensor $K_{ij}:=R^k_{\ ikj}$. In this paper we prove that the expression
\begin{equation}\label{EJEMPLO_1}
    2K_i^jK_j^i \omega_{ab} - R_{\ ijk}^l R_{l}^{\ ijk} \omega_{ab} + 4K_i^j R_{\ jab}^i - 4R_{\ iak}^j R_{\ jb}^{i \ \ k} \ ,
\end{equation}  is generically non-zero whenever if the dimension of $X$ is greater than 4, whereas it vanishes identically if the dimension of $X$ is lesser or equal than $4$ (in, fact, this property characterizes it, see Theorem \ref{2TensorFedosovIdentity}).

These {\it dimensional identities}  already attracted attention very early in the development of General Relativity (\cite{LANCZOS}), as well as in Riemannian geometry  (\cite{GILKEY}). 
More recently, they have also been studied on Kahler and pseudo-Kahler manifolds (\cite{GPS_KAHLER}, \cite{PARK}).

Classical results concerning these identities were established by means of lengthy calculations, involving computations with large expressions of multi-indexes (\cite{LANCZOS}, \cite{GILKEY}). Nevertheless, some years ago these arguments were dramatically simplified by Gilkey-Park-Sekigawa (\cite{GPS_RIEMANN}, \cite{GPS_PSEUDO}), that shed light into the role of classical invariant theory in these dimensional phenomena. His work also paved the road to a broader study of dimensional identities (\cite{NN_JGP14}) and caught the attention to its applications (\cite{EPS}, \cite{NN_JGP16}).

In this paper, we extend the analysis of these identities to Fedosov manifolds. To this end, it has been necessary to develop the theory of natural operations on Fedosov manifolds, up to the point of describing the relation of these invariants with the symplectic group. This description, that has only been achieved recently (\cite{GMN_FEDOSOV}), permits an analysis of dimensional identities that is  analogous to that made in pseudo-Riemannian geometry (\cite{NN_JGP16}).

To finish with, although the precise statements of our results will be expounded in the next section, let us briefly comment two particular cases.

To this end, firstly observe that a dimensional identity is defined by a natural tensor that is identically zero below certain dimension (cf. Definition \ref{DEFI_IDENTIDAD}). Moreover, one of the main results of the theory of natural operations says that the spaces of natural tensors associated to a Fedosov structure, subject to a homogeneity condition (cf. Definition \ref{DEFI_NATURAL}), are finite dimensional $\mathbb{R}$-vector spaces.

Regarding scalar curvature identities, that is to say, homogeneous natural functions that vanish below certain dimensions, we prove that there are no identities at all, if the weight is greater or equal than $-2$. The first non-trivial case, that of  homogeneous functions of weight -4, is analysed in this statement

\begin{thm}[Scalar identities] The vector space of natural functions, homogeneous of weight $- 4$, that vanish on dimension $2$ is a one dimensional vector space, generated by the function
\begin{equation}\label{EJEM_FUNCTION}
    R^{\ i_2j_1k_1}_{i_1}R_{i_2}^{\ i_1 j_2k_2} (\omega \wedge \omega)_{j_1k_1j_2k_2} \ .
\end{equation}

On higher dimensions, there are no homogeneous scalar identities of weight -4.
\end{thm}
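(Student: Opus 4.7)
The plan is to mirror the strategy used by Gilkey--Park--Sekigawa in the pseudo-Riemannian setting, now adapted to Fedosov geometry by means of the results on natural operations developed in \cite{GMN_FEDOSOV}. The starting point is the Reduction Theorem, which identifies homogeneous natural scalars on Fedosov manifolds with $\mathrm{Sp}(2n,\RR)$-invariant polynomial expressions in the curvature tensor $R$, its iterated covariant derivatives $\nabla^k R$, and the symplectic form $\omega$ together with its inverse. In this way, the classification problem is transferred into one about invariants of the symplectic group.

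The next step is to exploit the weight $-4$ condition to drastically cut down the admissible monomials. A dimensional weight count (each curvature factor, each covariant derivative, and each copy of $\omega$ or $\omega^{-1}$ contributing a known amount) forces any such scalar to be quadratic in the undifferentiated curvature tensor, with no covariant derivatives present, and with a fixed net number of $\omega$'s contracting the eight free indices of $R\otimes R$. By the First Fundamental Theorem for $\mathrm{Sp}$, the resulting invariants are spanned by complete index contractions against $\omega^{ab}$, which produces an explicit short list of candidate scalars.

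After imposing the symmetries of the symplectic curvature tensor (the symplectic Bianchi identity and the pair-symmetry of $R$ established in \cite{GMN_FEDOSOV}), one obtains a small set of linearly independent candidates spanning the space of weight $-4$ natural scalars in all sufficiently high dimensions. Dimensional identities on a given dimension $2n$ are then detected by the Second Fundamental Theorem for $\mathrm{Sp}$, equivalently by the fact that a total antisymmetrization over more than $2n$ indices must vanish on a $2n$-dimensional vector space. In particular, for $n=1$ the $4$-form $\omega\wedge\omega$ vanishes identically, which provides an immediate candidate identity.

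The main obstacle I anticipate lies in the final bookkeeping: one must show that, modulo the relations valid in every dimension, the combination (\ref{EJEM_FUNCTION}) is the \emph{unique} (up to scalar) linear combination of the listed candidates whose restriction to dimension $2$ is identically zero, and that no further combination becomes zero in dimensions $2n\geq 4$. This amounts to comparing two finite-dimensional spaces of $\mathrm{Sp}$-invariants (the generic one and the one in dimension $2$) and checking that the kernel of the restriction map is exactly one-dimensional, generated by (\ref{EJEM_FUNCTION}). The claim that there are no identities of weights $\geq -2$ will follow from the same enumeration, which in those cases produces no nontrivial $\mathrm{Sp}$-invariant monomials at all.
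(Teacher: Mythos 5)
Your overall strategy is the one the paper actually follows (it deduces this statement from the Main Theorem, whose proof is exactly: the reduction theorem of \cite{GMN_FEDOSOV}, the First and Second Fundamental Theorems of $\Sp$, and an index-counting argument). However, there is one concrete gap in your weight count. You claim that homogeneity of weight $-4$ ``forces any such scalar to be quadratic in the undifferentiated curvature tensor, with no covariant derivatives present.'' This is false as a statement about the space $T_{0,-4}[2n]$: by Theorem \ref{MainThmFed} the relevant equation is $2d_1+3d_2+4d_3+\cdots = p-\delta = 4$, which has \emph{two} solutions, $d_1=2$ and $d_3=1$. The second solution contributes a summand $\mathrm{Hom}_{\Sp}(N_3,\RR)$, i.e.\ scalars that are linear in the second-order normal tensors (second covariant derivatives of the curvature), and you cannot discard it by weight considerations alone. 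Since your plan is to enumerate all candidates and then compute the kernel of the restriction map to dimension $2$, an incomplete enumeration leaves the uniqueness claim (and even the ``no identities in higher dimension'' claim) unestablished.

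The missing ingredient is the argument the paper uses to show that derivative terms cannot produce dimensional identities: a dimensional identity must, by the Second Fundamental Theorem of $\Sp$, arise from an alternation over more than $2n$ index slots, but the symmetries of a normal tensor $T_{ijka_1\ldots a_s}\in N_s$ force any alternation of three or more of its indices to vanish. Hence each normal-tensor factor contributes at most two alternatable indices, so the $d_3=1$ component has at most $2$ such indices, which is not enough to exceed $2n=2$; only the $S^2N_1$ component survives, and there the count pins the invariant down to the contraction of $\omega\wedge\omega$ with the Chern form $c_2$, which is the expression (\ref{EJEM_FUNCTION}). If you add this counting step, your argument closes and coincides in substance with the paper's.
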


Another interesting case is that of 2-covariant identities; that is to say, 2-covariant natural tensors that vanish below certain dimension. In this situation, we prove that there are no identities if the tensors are homogeneous of weight greater or equal than $0$. Thus, the first non-trivial case is that of homogeneous tensors of weight $- 2$:

\begin{thm}[2-covariant identities]\label{2TensorFedosovIdentity}
The vector space of natural 2-tensors, homogeneous of weight $- 2$, that vanish in dimension $4$  is a one dimensional vector space, generated by the following $2$-form
\begin{equation}\label{2TensorIdentidad}
    R^{\ i_2j_1k_1}_{i_1}R_{i_2}^{\ i_1 j_2k_2} (\omega \wedge \omega \wedge \omega)_{j_1k_1j_2k_2ab} \ .
\end{equation}

On higher dimensions, there are no homogeneous 2-tensor identities of weight -4.
\end{thm}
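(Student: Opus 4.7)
The plan is to transport to Fedosov geometry the strategy developed by Gilkey--Park--Sekigawa in the (pseudo-)Riemannian and K\"ahler settings (\cite{GPS_RIEMANN}, \cite{GPS_PSEUDO}, \cite{GPS_KAHLER}), using the reduction theorem for natural operations established in \cite{GMN_FEDOSOV}. That result asserts that natural $2$-covariant tensors on Fedosov manifolds, homogeneous of weight $-2$, are in bijection with $\Sp(2n,\mathbb{R})$-equivariant polynomial maps from the curvature jet module into $(\mathbb{R}^{2n})^* \otimes (\mathbb{R}^{2n})^*$. A standard weight count then forces such a map to be quadratic in the curvature tensor $R$, with no covariant derivatives appearing.

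The first step is to produce a spanning set of these invariants using the First Fundamental Theorem of invariants of the symplectic group: after fixing two free covariant indices, the remaining six indices of $R \otimes R$ must be contracted pairwise using the symplectic form $\omega$ and its inverse. Exploiting the symmetries of the symplectic curvature (the first Bianchi identity, together with the identity $R_{(ij)kl}=0$ inherited from $\nabla\omega=0$) and the skew-symmetry of $\omega$, the resulting combinatorial list can be collapsed to a small family of independent generators, valid in generic dimension $2n\geq 6$.

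The second step is to analyse how these generators degenerate in dimension~$4$ via the Second Fundamental Theorem for $\Sp(2n,\mathbb{R})$. The relevant manifestation is that the $6$-form $\omega\wedge\omega\wedge\omega$ vanishes identically on any manifold of dimension $<6$, hence on any $4$-dimensional Fedosov manifold. Any invariant that can be written as a contraction of $R\otimes R$ against $\omega^{\wedge 3}$ (leaving two free indices) therefore lies in the kernel of the restriction of the generic-dimension space to dimension $4$. Rewriting the generators from the first step in these terms, a direct linear-algebra computation should show that this kernel is exactly one-dimensional, forcing the expression \eqref{2TensorIdentidad}---an evident element of the kernel---to be a generator.

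The main obstacle I anticipate is the bookkeeping of the first step: the enumeration of all quadratic $\Sp$-invariants built from $R\otimes R$ with two free indices, and their reduction modulo the Bianchi-type symmetries, is delicate and parallels but is distinct from the Riemannian case treated in \cite{NN_JGP16}. A secondary but essential point is to verify that \eqref{2TensorIdentidad} is not identically zero on Fedosov manifolds of dimension $\geq 6$; this can be done by evaluating it on a concrete test example, such as a well-chosen symplectic connection on $\mathbb{R}^{2n}$ whose curvature realises the required generic behaviour. Once both ingredients are in place, the dimension count yields the theorem, and the same framework gives the auxiliary statement that no homogeneous $2$-tensor identities of this weight exist in higher dimensions.
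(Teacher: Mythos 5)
Your overall route---the reduction theorem of \cite{GMN_FEDOSOV} to pass to $\Sp$-equivariant maps, the First Fundamental Theorem of $\Spp$ to span the invariants by $\omega$-contractions, and the Second Fundamental Theorem to detect what dies in dimension $4$---is exactly the paper's strategy. However, there are two concrete gaps. First, the ``standard weight count'' does \emph{not} force the invariant to be quadratic in $R$ with no covariant derivatives: for $p=2$, $\delta=-2$ the homogeneity equation $2d_1+3d_2+4d_3+\cdots=p-\delta=4$ of Theorem \ref{MainThmFed} admits the solution $d_3=1$ as well as $d_1=2$, i.e.\ there are natural $2$-tensors of weight $-2$ that are linear in the second covariant derivative of the curvature. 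These must be excluded from the \emph{kernel} by a separate argument; the paper does this by counting alternated indices (a dimensional relation needs at least $2n+2=6$ alternated indices, at most two indices of each normal-tensor factor may be alternated without annihilating the term, and the parity of the leftover indices of the order-$3$ factor then forces a three-index alternation inside one factor, which vanishes by the normal-tensor symmetries). Without some such step your kernel computation is incomplete. Second, the symmetry you invoke, $R_{(ij)kl}=0$, is wrong: what $\nabla\omega=0$ actually yields is that the lowered curvature is \emph{symmetric} in the relevant pair, so that it lies in $S^2T_{x_0}^*X\otimes\Lambda^2T_{x_0}^*X$ (cf.\ \cite{GELFAND}). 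This is not cosmetic: it is precisely because that pair is symmetric that it cannot enter the alternation coming from $\omega\wedge\omega\wedge\omega$ and must instead be chained into a trace, which is how the Chern form $c_2$---and hence the single generator \eqref{2TensorIdentidad}---arises. With the wrong symmetry your ``collapse of the combinatorial list'' would not land on the right answer, and the deferred ``direct linear-algebra computation'' is in any case where all the work lies; the paper replaces it by this structural observation together with Lemma \ref{LemaChern}.

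On the positive side, you correctly flag that one must check \eqref{2TensorIdentidad} is not identically zero in dimension $\geq 6$ in order to conclude that the kernel is exactly, and not merely at most, one-dimensional; the paper's argument shows every identity is proportional to this form, so your proposed evaluation on an explicit symplectic connection is a genuinely useful supplement rather than a redundancy.
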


The 2-form appearing in this statement coincides with the tensor presented at the beginning of the introduction; i.e., (\ref{2TensorIdentidad}) is an expanded expression of  (\ref{EJEMPLO_1}).

In fact, the similarities between the expressions of (\ref{EJEM_FUNCTION}) and (\ref{2TensorIdentidad}) already point to a general result describing these identities. And this is the main result of this paper: a theorem describing dimensional identities on Fedosov manifolds, that include the aforementioned two theorems as particular cases (cf. Main Theorem in Section \ref{SECT_MAIN_RESULT}). In a sense, it computes the `simplest' dimensional identities:
for any fixed dimension and any even number $p$ of indices, it describes the first space (i.e. that with greater weight) of $p$-covariant curvature identities in dimension $n$.

\section{Statements}

Let us fix a smooth manifold $X$ of even dimension $2n$ and let $\T^p$ be the sheaf of $p$-covariant tensors over $X$. 

Let $\F$ be denote the sheaf of Fedosov structures over $X$; that is to say, a section of $\F$ on an open set $U \subseteq X$ is a pair consisting on a closed 2-form $\omega$ on $U$ and a symmetric linear connection $\nabla$ on $U$ such that $ \nabla \omega = 0$.

\begin{defin}\label{DEFI_NATURAL}
A natural $p$-tensor associated to Fedosov structures in dimension $2n$ is a morphism of sheaves $\phi : \F \rightarrow \T^p$ verifying:
\begin{itemize}
\item Regularity: for any smooth manifold $T$ and any smooth family of Fedosov structures $\{(\omega_t, \nabla_t)\}_{t\in T}$ parametrized by $T$, the family $\{\phi((\omega_t, \nabla_t))\}_{t\in T}$ is also smooth.
\item Naturality: for any local diffeomorphism $\tau :U \rightarrow V$ between open subsets of $X$, it holds that 
$$T((\tau_* \omega, \tau_* \nabla))=\tau_* T((\omega,\nabla))$$
for any Fedosov structure $(\omega,\nabla)$ on $U$, where $\tau_*$ denotes the action of the diffeomorphism $\tau$ on the corresponding objects.
\end{itemize}

A natural tensor $T:\F \rightarrow \T^p$ is homogeneous of weight $\delta\in \ZZ$ if, for all non-zero $\lambda\in \RR$, 
\begin{equation}\label{HomogeneityCondition}
    T(\lambda^2 \omega, \nabla)=\lambda^\delta T(\omega, \nabla) \ .
\end{equation}
\end{defin}

%

Hence, the weight of a natural tensor must be an even number (take $\lambda=-1$).

Examples of homogeneous natural tensors include the symplectic form, the curvature operator of the symplectic connection, its covariant derivatives and tensor products or contractions among these objects (\cite{GMN_FEDOSOV,GELFAND}).

For any number of indices $p \in \NN$ and any even weight $\delta \in 2 \ZZ$, let us write 
$$T_{p,\delta}[2n]:= \left \{ \begin{array}{@{}ccc@{}}
	\text{ Natural tensors } \F \rightarrow \T^p \text{ in dimension $2n$} \\ 
	\text{homogeneous of weight }\delta  \\
  \end{array}  
\right \} \ .$$

The theory of natural operations readily implies that all the real vector spaces $T_{p,\delta} [2n]$ have finite dimension (\ref{MainThmFed}) and that, if  $\delta > p$, then $T_{p,\delta} [2n] =0$.

\subsection{Dimensional reduction}

From now on, let us fix $\R2n$ as our base manifold of even dimension $2n$.
Consider the Fedosov manifold  $(\RR^2,\eta,\bar{\nabla})$, where $\eta$ is the canonical symplectic form of $\RR^2$ and $\bar{\nabla}$ is the flat linear connection.

If $(\omega, \nabla)$ is a Fedosov structure on $\R2n$, we can produce a higher dimensional Fedosov structure by making the product (as Fedosov manifolds) of $(\R2n, \omega, \nabla)$ and $(\RR^2,\eta,\bar{\nabla})$: the $2(n+1)$-dimensional Fedosov manifold $(\R2n \times \mathbb{R}^2, \omega', \nabla')$, where $\omega'=\omega+\dd x_{n+1} \wedge \dd y_{n+1}$ and the connection $\nabla'$ is defined by the following Christoffel symbols:
\begin{align*}
&(\Gamma')_{ij}^k = \Gamma_{ij}^k, \quad 1\leq i,j,k \leq 2n \, . \\
&(\Gamma')_{ij}^k = 0, \quad \text{ in any other case} \, ,
\end{align*}
 where $\Gamma_{ij}^k$ stand for the Christoffel symbols of $\nabla$. 

 \begin{defin}
The dimensional reduction map $r_n \colon  T_{p,\delta}[2(n+1)] \longrightarrow T_{p,\delta}[2n]$ sends a natural tensor $T$ to the natural tensor
$$ r_n(T)(\omega,\nabla):=  i^*(T(\omega',\nabla')) \ ,$$ where $i$ denotes the embedding $i:\R2n \hookrightarrow \R2n\times \mathbb{R}^2, \, x \mapsto (x,0,0)$.

\end{defin}


The $r_n$ are checked to be well-defined, $\RR$-linear maps. That is to say, for any choice of  $p\in \NN$ indices and weight $\delta\in 2\ZZ$, there exists a sequence of $\mathbb{R}$-linear maps
\[
\ldots \xrightarrow{\ \ } T_{p,\delta} [2(n+1)] \xrightarrow{\ r_{n} \ } T_{p,\delta} [2n] \xrightarrow{\ \ } \ldots  \xrightarrow{\ r_2\ } T_{p,\delta}[4] \xrightarrow{\ r_{1}\ } T_{p,\delta}[2] \longrightarrow 0 \ .
\]

In Section \ref{SectionProof}, we will prove certain general results concerning these sequences:
\begin{enumerate}
    \item the maps $r_n$ are surjective for all $n\in \NN$ (Proposition \ref{DimensionalSurjective});
    \item they stabilize: the $r_n$ are isomorphisms for sufficiently large $n$ (Lemma \ref{LemaIsomorfismos}). 
\end{enumerate}

In order to get a deeper knowledge of this dimensional reduction procedure, we are thus led to investigate the kernels of these maps $r_n$.





\begin{defin}\label{DEFI_IDENTIDAD} A dimensional curvature identity in dimension $2n$ is an element of the space 
$$K_{p,\delta}[2n]:=\mathrm{Ker }(r_n \colon T_{p,\delta}[2(n+1)] \rightarrow T_{p,\delta}[2n]) \ . $$
\end{defin}




\subsection{Main result}\label{SECT_MAIN_RESULT}

The two results presented in the introduction are particular cases of a more general statement. To formulate it, let us first recall the notion of Chern form associated to a linear connection:

\begin{defin}\label{DefiChern}
A Chern form $c_q$ is a natural $2q$-form on $X$ that belongs to the algebra generated by exterior products of (real-valued) natural forms of the following kind
$$\mathrm{tr } \left( R\wedge \ldots \wedge R \right) \ ,$$
where the wedge product $\wedge$ of endomorphism-valued $2$-forms  is defined using the composition of endomorphisms. In index notation, these generators of the algebra of Chern forms are the alternation of the indices $j$ and $k$ of the expression
\[
R_{i_1j_1k_1}^{i_q} R_{i_2j_2k_2}^{i_1} \ldots  R_{i_qj_qk_q}^{i_{q-1}} \ .
\] 
\end{defin}



\begin{mainthm*}
Let $\delta \in 2\mathbb{Z}$ and $p \in 2\mathbb{N}$ be such that $\delta \leq p$. 

If $2n \geq 2p - \delta$, then the vector space $K_{p,\delta}[2n]$ is zero; i.e., there are no dimensional identities of the curvature.

If $2n = 2p-\delta -2$, then the vector space $K_{p,\delta}[2p-\delta -2]$ of curvature identities is spanned by the $p$-forms
\begin{equation}\label{pFormsIndentidades}
\left\langle\,\omega \, \wedge \, \stackrel{k+\frac{p}{2}}{\ldots} \, \wedge \, \omega \, , \, c_{k} \,\right\rangle \ ,    
\end{equation}
where  $k:=\frac{p-\delta}{2}$, $c_{k}$ is a Chern form and $\left\langle \, \cdot \ , \,\cdot \, \right\rangle$ denotes the contraction of $(2k+p)$-forms with $2k$-forms induced by $\omega$.
\end{mainthm*}


Moreover, Chern forms $c_k$, with  $k$ odd, vanish (Lemma \ref{LemaChern}). Hence,

\begin{cor}
If $k := \frac{p-\delta}{2}$ is odd, then the vector space $K_{p,\delta}[2p-\delta -2]$ of dimensional curvature identities is zero.
\end{cor}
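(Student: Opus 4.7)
The corollary is a direct consequence of the Main Theorem together with Lemma~\ref{LemaChern} on the vanishing of odd-degree Chern forms; accordingly, the substantive plan addresses the Main Theorem, following the Gilkey--Park--Sekigawa strategy of the pseudo-Riemannian setting~\cite{GPS_PSEUDO} adapted to the symplectic case via the reduction theorem of~\cite{GMN_FEDOSOV}. The first step is to apply that reduction theorem to identify $T_{p,\delta}[2n]$ with the vector space of $\Spp$-equivariant polynomial maps of weight $\delta$ from the space of formal curvature jets into $(\RR^{2n})^{\otimes p}$. Under this identification, the dimensional reduction map $r_n$ translates into the restriction along the embedding $\Spp \hookrightarrow \Sp(2(n+1),\RR)$, evaluated on formal jets whose ``extra'' components (those carrying indices in the $\RR^2$ factor) are set to zero; a natural tensor is a dimensional identity in dimension $2n$ precisely when this restriction vanishes identically.

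Next, I would invoke the First Fundamental Theorem for $\Sp$ to describe such equivariant maps as linear combinations of full contractions of tensor products of $R$, $\nabla R$, $\nabla^2 R, \ldots$ with copies of $\omega$ and $\omega^{-1}$. A degree count based on homogeneity of weight $\delta$ forces the first space (i.e.\ that with $2n = 2p-\delta-2$) to be generated by polynomials of minimal degree $k = (p-\delta)/2$ in the curvature with no covariant derivatives, since any additional factor would strictly lower the weight. The Second Fundamental Theorem for $\Sp$ then identifies the dimension-dependent relations among these invariants; the fundamental one, and the only one capable of producing identities within the first space, is the vanishing of the $(n+1)$-fold wedge $\omega \wedge \cdots \wedge \omega$ in dimension $2n$, since it is an alternating form of degree $2n+2$. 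The identities in \eqref{pFormsIndentidades} realize precisely this vanishing: the factor $\omega^{k+p/2}$ is a form of degree $2k+p = 2n+2$ in dimension $2p-\delta-2$, hence zero there, while nonzero whenever $2n \geq 2p-\delta$.

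The main obstacle will be the upper bound---showing that every element of $K_{p,\delta}[2p-\delta-2]$ arises from this wedge-and-Chern mechanism. This requires a careful use of the Second Fundamental Theorem, together with the Bianchi-type symmetries of the curvature, to rule out additional dimension-dependent relations in the first space: any such further relation would necessarily involve strictly more curvature factors and thus strictly lower weight than $\delta$. Once the Main Theorem is established, the corollary follows at once: if $k = (p-\delta)/2$ is odd, Lemma~\ref{LemaChern} gives $c_k = 0$---a consequence of the symmetry $R_{abcd} = R_{bacd}$ that places the matrix-valued 2-form $R^{a}_{\ b}$ in the symplectic Lie algebra, so that its odd traces vanish by a standard transpose argument---and hence every generator in \eqref{pFormsIndentidades} vanishes, giving $K_{p,\delta}[2p-\delta-2] = 0$.
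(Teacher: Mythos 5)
Your proposal is correct and follows the paper's own route: the corollary is deduced exactly as in the text, by combining the Main Theorem's description of $K_{p,\delta}[2p-\delta-2]$ as the span of the forms \eqref{pFormsIndentidades} with Lemma~\ref{LemaChern} (your transpose argument for $c_k=0$ when $k$ is odd is the coordinate-free version of the paper's index computation). The accompanying sketch of the Main Theorem also matches the paper's strategy (reduction theorem, First and Second Fundamental Theorems of $\Sp$, exclusion of covariant derivatives by the degree count), so there is nothing substantive to add.
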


Recall the expanded expression of the tensor $T$ of (\ref{2TensorIdentidad}):
$$T_{ab}=2K_i^jK_j^i \omega_{ab} - R_{\ ijk}^l R_{l}^{\ ijk} \omega_{ab} + 4K_i^j R_{\ jab}^i - 4R_{\ iak}^j R_{\ jb}^{i \ \ k} \ .$$
Utilizing the identity $R_{\ imj,}^{k \ \ \ m} = K_{\ i,j}^k$, it follows that $\mathrm{div } \, T=0$.

We conjecture that all the $p$-forms that appear in the Main Theorem have null divergence:
\begin{conj} $\mathrm{div }\, \left\langle\,\omega \, \wedge \, \stackrel{k+\frac{p}{2}}{\ldots} \, \wedge \, \omega \, , \, c_{k} \,\right\rangle = 0 \ .$
\end{conj}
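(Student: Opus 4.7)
First, observe that the $p$-forms $\langle\omega^{\wedge m},c_k\rangle$ of the Main Theorem live in the critical dimension $2m := 2p-\delta$, where $m = k + p/2$. In this dimension $\omega^{\wedge m}$ is the top exterior power of $\omega$ and hence a non-zero scalar multiple of the symplectic volume form. A direct unwinding of the contraction pairing then yields
\[
\bigl\langle\omega^{\wedge m}, c_k\bigr\rangle \,=\, m!\cdot *_\omega c_k \ ,
\]
where $*_\omega\colon\Omega^{2k}\to\Omega^{p}$ is the symplectic Hodge star. Thus the conjecture reduces to showing $\mathrm{div}(*_\omega c_k) = 0$ for every Chern form $c_k$.

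The plan is to combine two classical facts. First, the Chern--Weil theorem guarantees that every Chern form in the sense of Definition \ref{DefiChern} is closed: $dc_k = 0$. Second, I would establish the symplectic analogue of the classical Riemannian formula relating the codifferential to $\ast d\,\ast$: on a Fedosov manifold, the covariant divergence $\mathrm{div}\,\alpha := \omega^{ab}\nabla_a\alpha_{b\cdots}$ of a form $\alpha$ satisfies
\[
\mathrm{div}\,\alpha \,=\, \pm\,*_\omega\, d\,*_\omega\,\alpha \ .
\]
Its proof would use $\nabla\omega = 0$ in two places: first, to conclude that $*_\omega$ commutes with $\nabla$ on tensors (since $\nabla(\omega^{\wedge m})=0$); and second, to see that the covariant divergence on forms is independent of the choice of torsion-free symplectic connection, because the difference of two such connections is a totally symmetric tensor whose contraction with the antisymmetric $\omega^{-1}$ vanishes. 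This identifies $\mathrm{div}$ on forms with Brylinski's symplectic codifferential $[d,\Lambda]$, for which the identity $[d,\Lambda] = \pm *_\omega d\, *_\omega$ is classical.

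Once these facts are in place the conclusion is immediate: using $*_\omega^2 = \pm\mathrm{Id}$,
\[
\mathrm{div}\bigl\langle\omega^{\wedge m},c_k\bigr\rangle \,=\, m!\cdot \mathrm{div}(*_\omega c_k) \,=\, \pm\, m!\cdot *_\omega\, d\,*_\omega *_\omega c_k \,=\, \pm\, m!\cdot *_\omega\, dc_k \,=\, 0 \ .
\]

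The main obstacle is the careful, correctly-signed justification of the symplectic identity $\mathrm{div} = \pm *_\omega d\, *_\omega$ in the Fedosov framework, since the standard literature on symplectic Hodge theory typically works with the canonical Poisson bracket rather than with a specific symplectic connection. A conceptually cleaner alternative would be a purely index-level computation: expanding $\mathrm{div}\bigl\langle\omega^{\wedge m},c_k\bigr\rangle$, using $\nabla\omega=0$ to transfer the covariant derivative onto $c_k$, and exploiting the total antisymmetry of $\omega^{\wedge m}$ in its $p+2k$ indices to recognise the result as proportional to the antisymmetrisation of $\nabla c_k$ over $2k+1$ indices, i.e.\ to $dc_k$, which vanishes.
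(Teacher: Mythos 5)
The paper does not prove this statement: it is explicitly left as a conjecture, with only the single instance $(p,\delta)=(2,-2)$ checked by a direct index computation using $R_{\ imj,}^{k\ \ \ m}=K_{\ i,j}^{k}$. So there is no "paper proof" to compare against; what you have written is a genuine proposed resolution, and its overall strategy looks sound to me. The three pillars are all legitimate: (i) the forms of the Main Theorem live in the critical dimension $2m=2k+p$, where $\omega^{\wedge m}$ is the Liouville volume up to a constant, so the pairing $\left\langle \omega^{\wedge m}, c_k\right\rangle$ is a nonzero multiple of the symplectic Hodge star $*_\omega c_k$; (ii) the Chern forms of Definition \ref{DefiChern} are closed for \emph{any} linear connection by the differential Bianchi identity (Chern--Weil), so $dc_k=0$; (iii) for a symmetric connection with $\nabla\omega=0$ one has $d=\mathrm{alt}\circ\nabla$ and $*_\omega$ commutes with $\nabla$, whence $\mathrm{div}=\pm\,*_\omega\, d\, *_\omega$ on forms by pointwise linear algebra, and $*_\omega^2=\mathrm{id}$ (Brylinski). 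Chaining these gives the vanishing, with all signs irrelevant to the conclusion.

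Two points deserve more care before this could be called a proof. First, your justification that the covariant divergence of a form is connection-independent only disposes of the term $\omega^{ab}S^{e}_{ab}\alpha_{e\cdots}$; the cross terms $\omega^{ab}S^{e}_{ac_i}\alpha_{b\cdots e\cdots}$ also vanish (the two raised slots of $\alpha$ are skew while $S$ is symmetric in the contracted pair), but that needs to be said --- or better, skip the independence claim entirely and work throughout with the given Fedosov connection, for which the chain in (iii) is self-contained. Second, the reduction to $*_\omega c_k$ is pinned to the critical dimension $2m=2k+p$; that is exactly where the elements of $K_{p,\delta}[2p-\delta-2]$ live, so the argument does cover the conjecture as stated, but it would say nothing about the same expression viewed as a natural tensor in higher dimensions, where $\omega^{\wedge m}$ is no longer a top form --- your proposed index-level alternative (transferring $\nabla$ onto $c_k$ and recognising an antisymmetrisation over $2k+1$ indices, i.e.\ $dc_k$) is the route that would remove this restriction. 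Modulo writing out the sign-correct version of (iii), I believe your argument settles the conjecture, which would be a genuine addition to the paper.
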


In Riemannian geometry, the analogous statement is true: the first non-trivial curvature identities are divergence-free (\cite{NN_JGP14}). 



\section{Preliminaries}
\subsection{Natural operations on a Fedosov manifold}

\begin{defin} 
The  space $N_{m}\,$ of normal tensors of order $m$ at a fixed point $x_0\in X$ is the vector subspace of $(m+3)$-tensors whose elements $T$ verify the following symmetries:

\begin{enumerate}
\item they are symmetric in the second and third indices, and in  the last $m$:
$$T_{ikja_1\ldots a_m}=T_{ijka_1\ldots a_m}, \quad T_{ijka_{\sigma(1)}\ldots a_{\sigma(m)}}=T_{ijka_1\ldots a_m}, \quad \forall \sigma \in S_m \ ;$$
\item the symmetrization of the last $m+2$ covariant indices is zero:
$$\sum_{\sigma \in S_{m+2}} T_{i\sigma(j)\sigma(k)\sigma(a_1)\ldots\sigma(a_m)=0} \ ;$$
\item the following tensor is symmetric in $k$ and $a_1$:
$$T_{ikja_1\ldots a_m}-T_{jkia_1\ldots a_m} \ .$$
\end{enumerate}
\end{defin} 

Due to its symmetries, it is immediate that $N_0=0$.

\begin{thm} \label{MainThmFed} 
Let $p\in \mathbb{N}$ and $\delta \in \mathbb{Z}$. Fixing a point $x_0 \in \R2n$ and a chart $U\simeq \R2n$ around $x_0$ produces a $\mathbb{R}$-linear isomorphism

$$
\begin{CD}
T_{p,\delta}[2n]=
\bigoplus \limits_{d_1, \ldots , d_r} \mathrm{Hom}_{\Sp}(S^{d_1}N_1 \otimes \ldots \otimes S^{d_r}N_r , T^p_{x_0} ) \ ,
\end{CD}
$$
where $\Sp=\Spp$ denotes the symplectic group, $T^p_{x_0}$ denotes the vector space of $p$-covariant tensors at $x_0$ and $d_1, \ldots , d_r$ run over the non-negative integer solutions of the equation
\[\label{eqMainFed}
2d_1 + \ldots + (r+1)d_r =p-\delta \ .
\] 
\end{thm}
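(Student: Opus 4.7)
The plan is to reduce the classification of natural tensors on Fedosov manifolds to a finite-dimensional problem of computing $\mathrm{Sp}$-equivariant Hom spaces, following the same template that works for Riemannian metrics and for linear connections in isolation. Concretely, I would adapt the Reduction Theorem for linear connections (already available in the literature) to the Fedosov setting, in the spirit of the recent paper \cite{GMN_FEDOSOV}.

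First, I would invoke the Peetre--Slov\'ak theorem together with the regularity hypothesis to show that any natural $p$-tensor $T$ is, at a fixed point $x_0$, a function of a finite jet of the Fedosov structure $(\omega,\nabla)$. Evaluating at $x_0$ and using naturality under the subgroup $\diffinfty$ of diffeomorphisms fixing $x_0$, $T$ becomes a smooth, equivariant map from the space of $\infty$-jets of Fedosov structures at $x_0$ into the finite-dimensional target $T^p_{x_0}$. Equivalently, because $T$ is tensorial, it only depends on the $\diffinfty$-orbit of the jet.

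Second, I would carry out the reduction to normal tensors. Using a Darboux chart to trivialise $\omega$ at $x_0$ and then Fedosov-normal coordinates adapted to $\nabla$, the jets of $(\omega,\nabla)$ at $x_0$ can be represented by a sequence of tensors whose components are precisely the iterated symmetrised covariant derivatives of the curvature written in the chosen frame. A direct computation, using $\nabla\omega = 0$ and the first and differential Bianchi identities, verifies that these tensors satisfy exactly the three symmetries listed in the definition of $N_m$; conversely, every element of $\prod_{m\geq 1} N_m$ is realised. Moreover, the residual action after fixing normal coordinates is that of the linear symplectic group $\mathrm{Sp}=\Spp$ on each $N_m$. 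This yields the key identification
\[
T_{p,\delta}[2n] \;\simeq\; \mathrm{Hom}_{\Sp}\!\big(\,C^\infty(\textstyle\prod_{m\geq 1} N_m),\, T^p_{x_0}\,\big)\,,
\]
understood as smooth $\Sp$-equivariant maps.

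Third, I would impose the homogeneity condition. The scaling $\omega\mapsto\lambda^2\omega$ induces a grading on $\prod_m N_m$ in which the factor $N_m$ sits in degree $m+1$ (this accounts simultaneously for the $m$ symmetrised covariant derivatives producing the normal tensor of order $m$ and for the intrinsic weight of the curvature in a symplectic frame, and matches the known Riemannian analogue shifted by one because the Fedosov structure is determined by a connection rather than by a metric). Since the target carries weight $0$, $\Sp$-equivariance together with the homogeneity condition $T(\lambda^2\omega,\nabla)=\lambda^\delta T(\omega,\nabla)$ forces the map to be polynomial and concentrated in the multihomogeneous component of total degree $\sum_{m\geq 1}(m+1)d_m = p-\delta$. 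This in turn selects the finite direct sum over the non-negative integer solutions of $2d_1+3d_2+\cdots+(r+1)d_r = p-\delta$ announced in the statement, giving the claimed isomorphism with $\bigoplus \mathrm{Hom}_{\Sp}(S^{d_1}N_1\otimes\cdots\otimes S^{d_r}N_r,T^p_{x_0})$.

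The main obstacle I expect is the second step: producing a clean Fedosov analogue of the classical Reduction Theorem for linear connections. One must verify that the normalization process is consistent with the constraint $\nabla\omega=0$ (so that normal coordinates exist within the class of Fedosov structures) and, more subtly, that the symmetries of the resulting tensors are exactly those defining $N_m$ and no more. The third, less obvious symmetry in the indices $k$ and $a_1$ is precisely the place where the Fedosov-specific relation $\nabla\omega=0$ is needed beyond what the general linear-connection theory provides, and verifying it carefully is the technical heart of the argument.
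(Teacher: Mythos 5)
This theorem is stated in the paper without proof --- it is recalled from the companion work \cite{GMN_FEDOSOV}, whose argument is precisely the route you outline (Peetre--Slov\'ak to pass to finite jets, a Fedosov version of the Reduction Theorem giving normal tensors with residual $\Spp$-action, and the Homogeneous Function Theorem to force polynomiality and select the components with $\sum_m (m+1)d_m = p-\delta$). Your sketch is correct in outline and essentially coincides with that approach, including your identification of the compatibility of normal coordinates with $\nabla\omega=0$ and the third symmetry of $N_m$ as the technical heart of the reduction step.
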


\subsection{Invariant theory of the symplectic group}

Let $\,V\,$ be a real vector space of finite dimension $\,2n$, let $\omega$ be a non-degenerate skew-symmetric bilinear form on $V$ and let $\,\Spp \,$ be the real Lie group of  $\mathbb{R}$-linear automorphisms that preserve $\omega$. 

The First Fundamental Theorem of the symplectic group (\cite{GW}) describes the vector space of $\Spp$-invariant linear maps $ V \otimes \stackrel{p}{\ldots} \otimes V \, \longrightarrow \, \RR \ : $

\begin{funthm}\label{MainTheoremSp} 
The real vector space $\,\mathrm{Hom}_{\Spp}\left( V \otimes \stackrel{p}{\ldots} \otimes V  \, , \, \RR \right) \,$
of invariant linear forms on $\, V \otimes \ldots \otimes V\,$ is null if $p$ is odd, whereas if $p$ is even it is spanned by 
$$ \omega_\sigma ((e_1 , \ldots , e_p)) := \omega (e_{\sigma(1)}, e_{\sigma(2)}) \ldots \omega (e_{\sigma(p-1)}, e_{\sigma(p)}) \ ,   $$
where $\sigma \in S_p .$
\end{funthm}

There may be linear relations between these generators, which are explicitly stated by the Second Fundamental Theorem  (\cite{GW, LEHRER}):

\begin{funthm2}\label{MainTheoremSp2} 
The only linear relations between the generators of $\,\mathrm{Hom}_{\Spp}\left( V \otimes \stackrel{p}{\ldots} \otimes V  \, , \, \RR \right) \,$ described above are:

\begin{enumerate}
\item The trivial symmetries induced by the product of scalars and the skew-symmetric property of the symplectic form.
\item The dimensional identities: if $n<p$, then for any $I\subseteq \{1, \ldots, p\}$ such that $|I|\geq n+1 $ one has:

\begin{equation}\label{EqMainTheoremSp2}
\sum_{\sigma \in S_{|I|}} (\mathrm{sgn } \, \sigma) \ \omega_\sigma = 0
\end{equation}

where $\sigma\in S_{|I|}$ is seen as an element of $S_p$ by leaving the indices $\{1,\ldots,p\} \setminus I$ intact.
\end{enumerate}
 
\end{funthm2}

In the applications, the following facts will also be relevant (\cite{COLLOQUIUM}):

\begin{propo}\label{ProposicionInvariantes} Let $E$ and $F$ be (algebraic) linear representations of $\Spp$. 
\begin{itemize}
\item \label{inv1} There exists a linear isomorphism $\mathrm{Hom}_{\Spp} (E , F) = \mathrm{Hom}_{\Spp} (E \otimes F^* , \mathbb{R}) $.

\item \label{inv2}
If $W\subseteq E$ is a sub-representation, then any equivariant linear map $W \to F$ is the restriction of an equivariant linear map $E \to F$.
\end{itemize}
\end{propo}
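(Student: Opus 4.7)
My plan is to treat the two bullet points separately, since they rely on different ingredients: the first on the standard tensor--hom adjunction, and the second on complete reducibility of algebraic representations of the symplectic group.

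For the first item, I would display the canonical chain of linear isomorphisms
\[
\mathrm{Hom}(E,F) \;\simeq\; E^{*} \otimes F \;\simeq\; (E \otimes F^{*})^{*} \;=\; \mathrm{Hom}(E \otimes F^{*}, \RR) \ ,
\]
sending a map $\phi : E \to F$ to the form $\widetilde{\phi}(e \otimes \alpha) := \alpha(\phi(e))$. Equipping duals and tensor products with the natural $\Sp$-action, this chain is easily seen to be $\Sp$-equivariant, so taking $\Sp$-invariants on both ends yields the desired identification. The verification is essentially bookkeeping: one checks that the action on $F^{*}$ used in forming $E \otimes F^{*}$ is indeed the dual action, so that $\widetilde{g\cdot\phi} = g \cdot \widetilde{\phi}$ for all $g \in \Sp$.

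For the second item, the key input is that every finite-dimensional algebraic representation of $\Sp = \Spp$ is completely reducible; this is classical, since $\Sp$ is a semisimple algebraic group, and can be proved via Weyl's unitary trick applied to the maximal compact subgroup $\mathrm{U}(n) \subset \Spp$. Granted this, one picks an $\Sp$-invariant complement $W'$ of $W$ inside $E$, so that $E = W \oplus W'$ as $\Sp$-representations, and then extends any equivariant $\phi : W \to F$ to an equivariant map $\tilde\phi : E \to F$ by declaring it to be zero on $W'$. Equivariance of the extension is immediate from the fact that both summands are stable under $\Sp$, and by construction the restriction of $\tilde\phi$ to $W$ coincides with $\phi$.

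The main obstacle is justifying complete reducibility: this is the only non-formal input, and everything else reduces to routine linear algebra. Once semisimplicity of $\Sp$ is accepted, both statements follow easily; alternatively, both may simply be quoted from the reference indicated in the text, as the authors do.
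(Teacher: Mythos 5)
The paper offers no proof of this proposition: it is quoted directly from the reference \cite{COLLOQUIUM}, so you are supplying an argument where the authors supply a citation. Your first item is fine: the tensor--hom adjunction $\mathrm{Hom}(E,F)\simeq E^*\otimes F\simeq (E\otimes F^*)^*$ is $\Sp$-equivariant and passing to invariants gives the claim; just note that this chain uses finite-dimensionality of $E$ and $F$, which holds in all the applications in this paper (the representations involved are tensor spaces and spaces of normal tensors).

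For the second item there is one step that would fail as you describe it. Complete reducibility of finite-dimensional representations of $\Spp$ is indeed the right key input, and it is true because the Lie algebra $\mathfrak{sp}(2n,\RR)$ is semisimple (Weyl's complete reducibility theorem, proved either with the Casimir element or with the compact real form $\mathrm{Sp}(n)=\mathrm{Sp}(2n,\mathbb{C})\cap \mathrm{U}(2n)$ of the \emph{complexification}). But it cannot be obtained by ``Weyl's unitary trick applied to the maximal compact subgroup $\mathrm{U}(n)\subset \Spp$'': averaging over $\mathrm{U}(n)$ produces a $\mathrm{U}(n)$-invariant complement of $W$, and there is no reason for that complement to be stable under the much larger non-compact group $\Spp$. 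A minimal counterexample to this mode of reasoning is the adjoint representation of $\mathrm{SL}(2,\RR)$: the line spanned by the generator of $\mathfrak{so}(2)$ is invariant under the maximal compact $\mathrm{SO}(2)$ but not under $\mathrm{SL}(2,\RR)$, although the adjoint representation is irreducible. Replace that justification by an appeal to semisimplicity of $\mathfrak{sp}(2n,\RR)$ (invariant subspaces for the connected group $\Spp$ coincide with invariant subspaces for its Lie algebra), or simply cite \cite{COLLOQUIUM} as the paper does; with that repair, the extension-by-zero on an invariant complement works exactly as you wrote.
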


\subsection{On the Chern forms of symplectic connections}

Due to the lack of a clear reference, let us include here a basic fact concerning the Chern forms of symplectic connections.

\begin{lemma}\label{LemaChern}
For $q$ odd, the Chern forms $c_q$ of a symplectic connection are null.
\end{lemma}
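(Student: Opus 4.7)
The plan is to reduce the statement to a Lie-algebraic fact about the symplectic algebra $\mathfrak{sp}(\omega)$, in the spirit of Chern--Weil theory: odd traces of powers vanish on $\mathfrak{sp}(\omega)$, and since the curvature of a symplectic connection takes values there, so do the corresponding Chern generators.

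First, since $\nabla \omega = 0$, the curvature endomorphism $R(X,Y)$ infinitesimally preserves $\omega$ at every point; equivalently, lowering the contravariant index of $R^\ell_{\ ijk}$ with $\omega$ produces a tensor symmetric in the first two indices. Thus, in any coordinate basis, every matrix factor $R_{jk} := R(\partial_j, \partial_k)$ that appears in the trace expression defining the Chern generator lies in $\mathfrak{sp}(\omega)$.

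Second, for any $A \in \mathfrak{sp}(\omega)$ the defining relation $A^T = -\omega A \omega^{-1}$ yields
\[
\mathrm{tr}(A^q) \,=\, \mathrm{tr}\bigl((A^T)^q\bigr) \,=\, \mathrm{tr}\bigl((-\omega A \omega^{-1})^q\bigr) \,=\, (-1)^q \, \mathrm{tr}(A^q),
\]
which forces $\mathrm{tr}(A^q)=0$ whenever $q$ is odd. By the polarization principle, the vanishing of the polynomial function $A \mapsto \mathrm{tr}(A^q)$ on the subspace $\mathfrak{sp}(\omega)$ is equivalent to the vanishing of its symmetric multilinear polarization $\tilde{P}$ on every $q$-tuple of elements of $\mathfrak{sp}(\omega)$; explicitly, $\sum_{\sigma \in S_q} \mathrm{tr}(A_{\sigma(1)} \cdots A_{\sigma(q)}) = 0$ for all $A_1,\ldots,A_q \in \mathfrak{sp}(\omega)$.

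Third, this symmetric identity annihilates the generator $\mathrm{tr}(R \wedge \cdots \wedge R)$: when the endomorphism-valued wedge is expanded, its coefficient on $dx^{a_1} \wedge \cdots \wedge dx^{a_{2q}}$ is the full antisymmetrization in $(a_1, \ldots, a_{2q})$ of $\mathrm{tr}(R_{a_1 a_2} R_{a_3 a_4} \cdots R_{a_{2q-1} a_{2q}})$; since the $R_{ab}$ are even-degree forms (and hence commute under $\wedge$), permuting the matrix factors only contributes symmetric relabellings, which reduces the coefficient to an instance of $\tilde{P}$ evaluated on $\mathfrak{sp}(\omega)^q$. A general Chern form of odd degree $q$ factors as a wedge of generators $\mathrm{tr}(R^{\wedge k_j})$ with $k_1 + \cdots + k_r = q$; since the sum is odd, at least one $k_j$ must be odd, so the corresponding factor vanishes by the previous step, and hence so does the full Chern form.

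The main technical point is the bookkeeping in Step~3, i.e.\ checking explicitly that the antisymmetrization built into the wedge product of the trace reduces to an instance of the polarization identity on $\mathfrak{sp}(\omega)^q$. This is a standard Chern--Weil manipulation once one uses the even-degree commutativity of the $R_{ab}$ to absorb all matrix reorderings into symmetric permutations, complementary to the form-side alternation.
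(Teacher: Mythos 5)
Your proof is correct and is essentially the paper's argument in coordinate-free Chern--Weil language: the identity $\mathrm{tr}(A^q)=(-1)^q\mathrm{tr}(A^q)$ for $A\in\mathfrak{sp}(\omega)$ is exactly the paper's index manipulation (lowering with $\omega$, picking up $(-1)^q$ from skew-symmetry, and using the symmetry of the lowered curvature in its first two indices), and your polarization step plays the role of the paper's reordering of factors absorbed by the alternation of the $b,c$ indices. Both proofs also reduce the general case to a single odd generator in the same way, so no gap.
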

\begin{proof}
Observe that it is enough to check that $R_{a_1b_1c_1}^{a_q} R_{a_2b_2c_2}^{a_1} \ldots R_{a_qb_qc_q}^{a_{q-1}}=0$, as all Chern forms $c_q$ of degree $2q$ contain a factor $R_{a_1b_1c_1}^{a_{q'}} R_{a_2b_2c_2}^{a_1} \ldots R_{a_{q'}b_{q'}c_{q'}}^{a_{{q'}-1}}$, with $q'\leq q$ odd. 

Due to the symmetries of the curvature tensor of a symplectic connection, it holds that
\begin{align*}
 R_{a_1b_1c_1}^{a_q} R_{a_2b_2c_2}^{a_1} \ldots R_{a_qb_qc_q}^{a_{q-1}} & = \omega^{a_q d_q} R_{d_q a_1b_1c_1} \omega^{a_1d_1} R_{d_1a_2b_2c_2} \ldots \omega^{a_{q-1} d_{q-1}} R_{d_{q-1}a_qb_qc_q} \\
& = (-1)^q \omega^{d_q a_q} R_{d_q a_1b_1c_1} \omega^{d_1a_1} R_{d_1a_2b_2c_2} \ldots \omega^{d_{q-1} a_{q-1}} R_{d_{q-1}a_qb_qc_q} \\
& = (-1)^q \omega^{d_1 a_1} R_{d_q a_1b_1c_1} \omega^{d_2a_2} R_{d_1a_2b_2c_2} \ldots \omega^{d_{q} a_{q}} R_{d_{q-1}a_qb_qc_q} \\
& = (-1)^q R_{d_qb_1c_1}^{d_1} R_{d_1b_2c_2}^{d_2} \ldots R_{d_{q-1}b_qc_q}^{d_{q}} \ .
\end{align*}

As $q$ is odd, $(-1)^q=-1$. Reordering the factors,
$$(-1)^q R_{d_qb_1c_1}^{d_1} R_{d_1b_2c_2}^{d_2} \ldots R_{d_{q-1}b_qc_q}^{d_{q}} = - R_{d_{q-1}b_{q}c_{q}}^{d_{q}} R_{d_{q-2}b_{q-1}c_{q-1}}^{d_{q-1}} \ldots R_{d_qb_1c_1}^{d_1} \ .$$

Recall that the indices $b$ and $c$ are being alternated, so that we can permute them in the following way:
$$ - R_{d_{q-1}b_{q}c_{q}}^{d_{q}} R_{d_{q-2}b_{q-1}c_{q-1}}^{d_{q-1}} \ldots R_{d_qb_1c_1}^{d_1} = - R_{d_{q-1}b_{1}c_{1}}^{d_{q}} R_{d_{q-2}b_{2}c_{2}}^{d_{q-1}} \ldots R_{d_qb_qc_q}^{d_1} \ .$$

Renaming the indices $d$ as $a$ ($d_i \rightarrow a_{q-i}$ for all $i\in \{1,\ldots,q-1\}$, $d_q \rightarrow a_q$), we are left with the original Chern form with opposite sign, and thus is null. 
 \qed
\end{proof}

\section{Proof of the general statement}\label{SectionProof}

Let us begin this section by proving that the dimensional reduction maps $r_n$ are surjective:

\begin{propo}\label{DimensionalSurjective}
The maps $r_n$ are surjective for all $n\in \NN$.
\end{propo}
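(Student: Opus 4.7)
The plan is to reduce the surjectivity of $r_n$ to an algebraic statement about invariants of the symplectic group, via the isomorphism of Theorem \ref{MainThmFed}, and then to construct the required lifts using the First Fundamental Theorem of $\Sp$.

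First, I would identify how $r_n$ acts at the algebraic level. The key observation is that, for the product Fedosov structure $(\omega', \nabla')$ on $\R2n \times \mathbb{R}^2$, the Christoffel symbols $(\Gamma')^k_{ij}$ vanish unless $i,j,k \leq 2n$. From the definition of normal coordinates it follows that, at the origin, the normal tensors of $(\omega', \nabla')$ of every order $m \geq 1$ lie inside the natural subspace $N_m \hookrightarrow N_m'$ (where $N_m'$ denotes the normal tensors in dimension $2n+2$) cut out by the block decomposition $\mathbb{R}^{2n+2} = \R2n \oplus \mathbb{R}^2$, and there they agree with the normal tensors of $(\omega,\nabla)$. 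Moreover, the pullback $i^*$ acts on $p$-covariant tensors at $x_0$ simply by restriction of arguments to $\R2n$. Under the isomorphism of Theorem \ref{MainThmFed}, these two facts identify $r_n$, on each summand indexed by $(d_1,\dots,d_r)$, with the linear map that sends an $\Sp(2n+2,\RR)$-equivariant $\phi'$ to the composition of $\phi'|_{\bigotimes_i S^{d_i} N_i}$ with the restriction of $p$-tensors from $\mathbb{R}^{2n+2}$ to $\R2n$.

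Next, using Proposition \ref{ProposicionInvariantes}, I would replace each $\mathrm{Hom}_{\Sp}(E,F)$ with the space of $\Sp$-invariant linear forms on $E \otimes F^*$. Dualising the restriction on $p$-tensors into an inclusion of the duals, the reduction map then corresponds to restriction of $\Sp(2n+2,\RR)$-invariant linear forms on a tensor-power representation of $\mathbb{R}^{2n+2}$ down to the sub-representation built from $\R2n$. By the First Fundamental Theorem of $\Sp$, every $\Sp(2n,\RR)$-invariant linear form on a tensor power of $V = \R2n$ is a linear combination $\sum_\sigma c_\sigma \omega_\sigma$ of the generators of the theorem, and the same formal expression $\sum_\sigma c_\sigma \omega'_\sigma$, now built with the higher-dimensional symplectic form $\omega'$, defines an $\Sp(2n+2,\RR)$-invariant form on the corresponding tensor power of $\mathbb{R}^{2n+2}$. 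Since $\omega'_{ij}=\omega_{ij}$ whenever $i,j\leq 2n$, this lifted form restricts on the chosen subspace to the original one; combining this with Proposition \ref{ProposicionInvariantes} (which guarantees that any invariant on a sub-representation extends to the ambient tensor power) and summing over the summands of Theorem \ref{MainThmFed} yields the surjectivity of $r_n$.

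The main obstacle is the bookkeeping of the first step: verifying that the normal tensors of the product connection $\nabla'$ at the origin are genuinely block-diagonal with respect to $\mathbb{R}^{2n+2} = \R2n \oplus \mathbb{R}^2$, so that $i^*$ matches the algebraic restriction along $N_m \hookrightarrow N_m'$. Once this identification is settled, the lift in the second step is essentially tautological, since any $\Sp(2n,\RR)$-invariant polynomial expression in the components of $\omega$ acquires an immediate meaning in higher dimension via the First Fundamental Theorem and restricts back correctly, because $\omega$ itself is the restriction of $\omega'$.
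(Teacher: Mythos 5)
Your proposal is correct and follows essentially the same route as the paper: translate $T_{2n}$ via Theorem \ref{MainThmFed} into an $\Sp$-equivariant map, extend it by Proposition \ref{ProposicionInvariantes} and write it as $\sum_\sigma \lambda_\sigma \omega_\sigma$ using the First Fundamental Theorem, and then reuse the same formal expression with $\omega'$ in dimension $2n+2$ to produce a preimage. The only difference is that you make explicit the block-diagonal bookkeeping identifying $r_n$ with algebraic restriction on normal tensors, which the paper compresses into the remark that ``it is easy to compute that $r_n(T_{2(n+1)})=T_{2n}$.''
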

\begin{proof}
By Theorem \ref{MainThmFed}, given any fixed non-singular $2$-form $\omega$ at a point $x_0\in \R2n$, any $T_{2n}\in T_{p,\delta}[2n]$ can be expressed as a $\Sp$-equivariant linear map $t_{2n}:S^{d_1} N_1 \otimes \ldots \otimes S^{d_r} N_r \rightarrow \bigotimes^p T_{x_0}^* \R2n$, where $d_1,\ldots,d_r$ are non-negative integers running over the solutions of Equation \ref{eqMainFed}. 
 
Due to Proposition \ref{ProposicionInvariantes} and using the polarity isomorphism of $\omega$, $t_{2n}$ is the restriction to $S^{d_1} N_1 \otimes \ldots \otimes S^{d_r} N_r \bigotimes^p T_{x_0}^* \R2n$ of a $\Sp$-equivariant map $\bigotimes^N T_{x_0}^* \R2n \rightarrow \RR$, where
$$N=4d_1+\ldots+(3+r)d_r +p \ .$$ 
Applying the First Fundamental Theorem of $\Sp$ and restricting, $t_{2n}=\sum_{\sigma \in S_N}\lambda_\sigma \omega_\sigma $, where $\sigma \in S_N$ and $\lambda_\sigma \in \RR$ for all $\sigma\in S_N$ \footnote{Recall that if $\omega$ is a non-singular 2-form on a vector space $V$, then the $\Sp$-equivariant linear maps $\omega_\sigma : V \otimes \stackrel{N}{\ldots} \otimes V \rightarrow \RR $ are defined as \[\omega_\sigma ((e_1,\ldots,e_N)):=\omega(e_{\sigma(1)},e_{\sigma (2)})\ldots \omega(e_{\sigma(N-1)},e_{\sigma (N)}) \ .\]}.

Then, denoting by $\bar{N}_1,\ldots,\bar{N}_r$ the spaces of normal tensors in $\RR^{2(n+1)}$ and defining the $\Sp$-equivariant map $t_{2(n+1)}:S^{d_1} \bar{N}_1 \otimes \ldots \otimes S^{d_r} \bar{N}_r \bigotimes^p T_{x_0}^* \RR^{2(n+1)}$ as $t_{2(n+1)}:=\sum_{\sigma \in S_N}\lambda_\sigma \omega_\sigma$, it is easy to compute that $r_n(T_{2(n+1)})=T_{2n}$, where $T_{2(n+1)}\in T_{p,\delta}[2(n+1)]$ is the natural tensor that corresponds to $t_{2(n+1)}$ by Theorem \ref{MainThmFed} (fixing the non-singular $2$-form $\omega+\dd_{i(x_0)} x_{n+1} \wedge \dd_{i(x_0)} y_{n+1}$ at $i(x_0)=(x_0,0,0)\in \RR^{2(n+1)}$).
\qed
\end{proof}

\medskip
Let us observe that the notion of dimensional identity  is closely related to the Second Fundamental Theorem of $\Sp$: let $T=\{T_{2m}\}_{m\in \NN}$ be a dimensional curvature identity in dimension $2n$, for some $n\in \NN$ (and so $T_{2n}=0$). By Theorem \ref{MainThmFed}, given any fixed non-singular $2$-form $\omega$, any $T_{2m}$ can be expressed as a $\Sp$-equivariant linear map 
$$t_{2m}:S^{d_1} N_1 \otimes \ldots \otimes S^{d_r} N_r \rightarrow \bigotimes^p T_{x_0}^* X \ ,$$
where $X$ denotes a smooth manifold of dimension $2m$.

As $T_{2n}=0$, it must also hold that $t_{2n}=0$.
 
The key fact is that, due to Proposition \ref{ProposicionInvariantes} and using the polarity isomorphism of $\omega$, $t_{2n}$ is the restriction to $S^{d_1} N_1 \otimes \ldots \otimes S^{d_r} N_r \bigotimes^p T_{x_0}^*X$ of a $\Sp$-equivariant map $\bigotimes^N T_{x_0}^*X \rightarrow \RR$, where
$$N=4d_1+\ldots+(3+r)d_r +p \ .$$ 
Applying the First Fundamental Theorem of $\Sp$, such a map is a linear combination 
$$\sum_{\sigma \in S_N}\lambda_\sigma \omega_\sigma $$
of maps $\omega_\sigma$ (defined in the First Fundamental Theorem of $\Sp$), with $\sigma \in S_N$, that is null when restricted to $S^{d_1} N_1 \otimes \ldots \otimes S^{d_r} N_r$.

As the symmetries of the spaces of normal tensors $N_i$ do not depend on the dimension of the base manifold, they cannot be the reason why $t_{2n}$ is null, as if that were the case then $t_{2m}=0$ for all $m\in \NN$ (as $t_{2m}=\sum_{\sigma \in S_N}\lambda_\sigma \omega_\sigma$ for all $m\in \NN$, see the proof of Proposition \ref{DimensionalSurjective}) and $T=0$, leading to a contradiction. 

Therefore, $\sum_{\sigma \in S_N}\lambda_\sigma \omega_\sigma=0$ before restricting to the spaces of normal tensors $N_i$. Thus we can invoke the Second Fundamental Theorem of $\Sp$, which says that $\sum_{\sigma \in S_N}\lambda_\sigma \omega_\sigma$ (and so any $t_{2m}$) can be expressed as in Equation \ref{EqMainTheoremSp2}:
\[
\sum_{\sigma \in S_{|I|}} (\mathrm{sgn }\, \sigma) \, \omega_\sigma \ ,
\]
where $I\subseteq \{1,\ldots,N\}$ is a set such that $|I|> 2n$.

Let us compute the maximum amount of indices in $S^{d_1} N_1 \otimes \ldots \otimes S^{d_r} N_r \bigotimes^p T_{x_0}^*X$ that can belong in the set $I$. Let $s\in \{1,\ldots,r\}$, let $T_{ijka_1\ldots a_s} \in N_s$ be a normal tensor and suppose that there are three of the indices $i,j,k,a_1,\ldots,a_s$ in $I$, i.e. they  are being alternated. As the indices $j$ and $k$ and the last $s$ indices are symmetric, we may suppose without loss of generality that $i$, $j$ and $a_1$ are the alternated indices. However, the symmetry
$$T_{ijka_1a_2\ldots a_s} - T_{jika_1a_2\ldots a_s} = T_{ija_1ka_2\ldots a_s} - T_{jia_1ka_2\ldots a_s}$$
assures that this alternation is zero. 

As only a maximum of two indices in each $N_s$ factor can belong in $I$, the maximum total amount of indices in $S^{d_1} N_1 \otimes \ldots \otimes S^{d_r} N_r \bigotimes^p T_{x_0}^*X$ that can belong in $I$ is
$$m=2(d_1+\ldots+d_r)+p \ .$$ 

\begin{lemma}\label{LemaIsomorfismos}
There are no dimensional identities of the curvature of $p$ indices and weight $\delta$ in dimension $2n\geq 2p-\delta$.
\end{lemma}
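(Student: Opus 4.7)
The plan is to combine the bound $m = 2(d_1+\ldots+d_r)+p$ on the maximum number of indices that the alternating set $I$ from the Second Fundamental Theorem of $\Sp$ can reach within the normal tensor factors (established in the discussion preceding the lemma) with the arithmetic constraint coming from the weight equation of Theorem \ref{MainThmFed}.

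First I would extract from the equation
\[
2d_1 + 3d_2 + \ldots + (r+1)d_r = p - \delta
\]
the inequality $d_1 + \ldots + d_r \leq (p-\delta)/2$, which is immediate because every coefficient on the left is at least $2$. Substituting this into the bound for $m$ yields $m \leq 2p - \delta$.

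Next, let $T \in K_{p,\delta}[2n]$ with $2n \geq 2p - \delta$. By Theorem \ref{MainThmFed}, each homogeneous component of $T$ corresponds to a $\Sp$-equivariant map $t_{2(n+1)} : S^{d_1}N_1 \otimes \ldots \otimes S^{d_r}N_r \to \bigotimes^p T^*_{x_0}\RR^{2(n+1)}$, and the hypothesis $r_n(T) = 0$ says that the analogous map $t_{2n}$ in dimension $2n$ vanishes. The discussion immediately preceding the lemma shows that such a vanishing can occur for one of two reasons. Either the underlying formal combination $\sum_\sigma \lambda_\sigma \omega_\sigma$ produced by the First Fundamental Theorem vanishes already by the dimension-independent symmetries of $N_1,\ldots,N_r$ and the skew-symmetry of $\omega$, in which case the same formal identity forces $t_{2m} = 0$ for every $m$, and in particular $t_{2(n+1)}=0$, giving $T = 0$; or it vanishes only by means of a dimension-dependent relation of the Second Fundamental Theorem, which requires an alternating set $I$ with $|I| > 2n$. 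But the preceding discussion bounds $|I| \leq m \leq 2p - \delta \leq 2n$, contradicting $|I|>2n$ and ruling out the second case. Hence $T = 0$.

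The main obstacle I anticipate is essentially bookkeeping rather than conceptual: one has to verify that the argument works component-wise over the finitely many solutions $(d_1,\ldots,d_r)$ of the weight equation, so that the bound $m \leq 2p - \delta$ can be applied uniformly to each component of $T$, and that the dichotomy between dimension-independent and dimension-dependent vanishing (which was only sketched in the preamble to the lemma) is correctly invoked for every component. Once this is in place, the statement reduces to the elementary arithmetic estimate above.
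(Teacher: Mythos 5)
Your proposal is correct and follows essentially the same route as the paper: the paper's proof is exactly the arithmetic estimate $m=2(d_1+\ldots+d_r)+p=2p-\delta-(d_2+2d_3+\ldots+(r-1)d_r)\leq 2p-\delta$ combined with the observation, established in the discussion preceding the lemma, that the Second Fundamental Theorem rules out identities once $2n\geq m$. Your more explicit unpacking of the dichotomy (dimension-independent vanishing forces $T=0$; dimension-dependent vanishing needs $|I|>2n$) is precisely what the paper relegates to that preceding discussion.
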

\begin{proof}
The Second Fundamental Theorem of $\Sp$ assures that there are no dimensional identities for $2n\geq m$. In our case,
$$
m=2(d_1+\ldots+d_r)+p=2p-\delta- (d_2+ \ldots + (r-1)d_r)\leq 2p-\delta \, ,
$$
finishing the proof.\qed
\end{proof}

\begin{lemma}
The dimensional identities of the curvature of $p$ indices and weight $\delta$ in dimension $2n= 2p-\delta-2$ are independent of derivatives of the curvature, that is, it corresponds to an $\Sp$-equivariant map $S^{d_1} N_1 \rightarrow \bigotimes^p T_{x_0}^*X$.

In particular, there are no dimensional identities in dimension $2n=2p-\delta-2$ for $p$ odd.
\end{lemma}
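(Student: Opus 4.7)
The plan is to continue, now with the sharper dimension $2n=2p-\delta-2$, the index-counting argument laid out in the paragraph preceding Lemma~\ref{LemaIsomorfismos}. As there, any identity $T\in K_{p,\delta}[2p-\delta-2]$ corresponds, via the First and Second Fundamental Theorems of $\Sp$ together with the ``at most two indices per $N_s$'' observation, to an alternating sum $\sum_{\sigma\in S_{|I|}}\mathrm{sgn}(\sigma)\,\omega_\sigma$ in which $|I|$ must satisfy
\[
2p-\delta-1 \;\leq\; |I| \;\leq\; m \;=\; 2(d_1+\cdots+d_r)+p \ .
\]
Combined with the weight equation $2d_1+3d_2+\cdots+(r+1)d_r=p-\delta$, this forces
\[
d_2+2d_3+\cdots+(r-1)d_r \;\leq\; 1 \ .
\]

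There are then only two sub-cases. If the left-hand side is zero, then $d_s=0$ for all $s\geq 2$, so $d_1=(p-\delta)/2$ and the identity lies in $\mathrm{Hom}_{\Sp}(S^{d_1}N_1,\bigotimes^p T_{x_0}^*X)$; hence it involves only the curvature and none of its covariant derivatives, which is precisely the main statement of the lemma. If instead the left-hand side equals one, then $d_2=1$, $d_s=0$ for $s\geq 3$, and $d_1=(p-\delta-3)/2$. To rule out this second case, I would proceed as follows: the upper bound on $|I|$ is attained with equality, so the alternating sum involves exactly two of the five indices of the $N_2$-factor; after discarding the pairs killed by the symmetries inside $\{j,k\}$ and inside $\{a_1,a_2\}$, the remaining admissible pairs can be matched by means of the third defining symmetry of $N_2$ (namely, that $T_{ikja_1 a_2}-T_{jkia_1 a_2}$ is symmetric in $k$ and $a_1$), and this matching yields a cancellation that forces the restriction of the alternating sum to $S^{d_1}N_1\otimes N_2\otimes \bigotimes^p T^*_{x_0}X$ to vanish.

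The \emph{in particular} claim then follows painlessly: in the surviving sub-case, $d_1=(p-\delta)/2$ must be a non-negative integer, and since $\delta$ is always even this forces $p$ to be even; for $p$ odd no such $d_1$ exists and therefore $K_{p,\delta}[2p-\delta-2]=0$. The main obstacle in the argument is the explicit combinatorial verification that no admissible choice of two indices inside the $N_2$-factor supports a non-trivial alternating sum once the three symmetries of $N_2$ are fully taken into account; apart from this cancellation, the proof is a direct continuation of the index bookkeeping already developed in the text.
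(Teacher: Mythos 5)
Your reduction to the two sub-cases ($d_s=0$ for all $s\geq 2$, or else $d_2=1$ and $d_s=0$ for $s\geq 3$), as well as your handling of the \emph{in particular} claim via the parity of $d_1=(p-\delta)/2$, are correct and agree with the paper. The genuine gap is in how you dispose of the sub-case $d_2=1$. You propose to show that the alternation dies for reasons internal to the $N_2$ factor: that once exactly two of its five indices lie in $I$, the three defining symmetries of $N_2$ force a cancellation. You never exhibit this cancellation --- you flag it yourself as ``the main obstacle'' --- and the proposed mechanism is doubtful on its face: having exactly two alternated indices in a normal-tensor factor does not by itself force vanishing. Indeed, in the surviving sub-case each $N_1$ factor has exactly two of its indices in $I$ (its skew-symmetric pair), and the resulting invariants are precisely the non-zero generators appearing in the Main Theorem. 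So an argument confined to the symmetries of a single factor with two alternated slots cannot suffice; any honest verification would also have to track how the three non-alternated indices of $N_2$ are contracted.

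The paper's actual argument is global and much shorter: when $d_2=1$ the bound $|I|\leq m$ is attained with equality, so exactly two indices per normal-tensor factor plus the $p$ free indices lie in $I$, leaving $2d_1+3$ indices outside $I$. Since this number is odd, in every $\omega_\sigma$ at least one leftover index must be $\omega$-contracted against an index of $I$; via the polarity isomorphism this drags that leftover index into the alternation, so some $N_s$ factor ends up with three effectively alternated indices, and the alternation of three indices of a normal tensor vanishes by the symmetry computation carried out just before Lemma~\ref{LemaIsomorfismos}. To complete your proof you should replace the postponed internal-$N_2$ cancellation by this parity argument.
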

\begin{proof}
For a dimensional identity to exist, it must hold that $m\geq 2n+1$. It holds that
\begin{align*}
2(d_1+\ldots+d_r)+p=m &\geq 2n+1 \\ &=2p-\delta -1 \\ &= 2d_1+\ldots+(r+1)d_r+p-1 \\&= 2(d_1+\ldots+d_r)+p+(d_2+\ldots+(r-1)d_r)-1 \, .
\end{align*}
Therefore, either $d_2=\ldots=d_r=0$ or possibly $d_2=1, d_3=\ldots=d_r=0$.

However, there are no dimensional identities in dimension $2n=2p-\delta-2$ if $d_2=1, d_3=\ldots=d_r=0$: recall that a pair of indices in each $N_s$ factor, plus $p$ indices, belong in the set $I$ of indices being alternated, and that no more than two indices in each $N_s$ factor can belong in $I$. At least one index in the remaining $3+2d_1$ indices has to be contracted to an index in $I$, as $3+2d_1$ is an odd amount. Thus, in some $N_s$ factor there are three indices being alternated, which is null, as explained above.
\qed
\end{proof}
\medskip
\noindent \textbf{Example. } Let $p=1$, $\delta=-4$, $d_2=d_1=1$. The tensor
$$T_i=\Gamma_{ij \ \ l}^{\ \ k m} \Gamma^{jabl} (\omega \wedge \omega)_{kmab}$$
is not a dimensional identity in dimension $2n=2p-\delta-2=4$, and
$$T'_i=\Gamma_{\ j \ \ l}^{c \ \ k m} \Gamma^{jabl} (\omega \wedge \omega \wedge \omega)_{kmabci}=0 \ .$$

\medskip
Observe that if we denote by $k=d_1$ the amount of curvature operators involved in the dimensional identity, then $\delta=p-2k$ and so we may rewrite $2n=2p-\delta-2=2k+p-2$.




\bigskip

\noindent \textbf{Proof of the Main Theorem. } By the previous lemma and the observation above, any dimensional identity $T$ for $2n=2k+p-2$ can be expressed as an $\Sp$-invariant linear map of the form
$$
T:S^{k} N_1 \otimes \bigotimes^p T_{x_0}^*X \longrightarrow \RR
$$
for a fixed $x_0 \in X$ and $\omega$ a non-singular 2-form at $x_0$.

Let us replace the space $N_1$ by applying the following $\Sp$-equivariant linear isomorphism (\cite{GELFAND}, Equation $5.3$)
\begin{align*}
N_1 &\longrightarrow \mathcal{R} \\
T_{ijkl} &\longmapsto R_{ijkl}=T_{ijlk}-T_{ijkl},
\end{align*}
where $\mathcal{R}\subset S^2T_{x_0}^*X \otimes \Lambda^2 T_{x_0}^*X$ is the vector subspace of tensors $R$ that satisfy the Bianchi identity:
$$R_{ijkl}+R_{iklj}+R_{iljk}=0 \ .$$

As explained before, out of the $4k+p$ indices only a maximum of two indices per $\mathcal{R}$ factor and the $p$ free indices can belong in $I$, summing up to $m=2k+p=2p-\delta$. It is also the minimum, as $m>2n=2p-\delta-2$ due to the Second Fundamental Theorem of $\Sp$ and $m$ must be even due to the First Fundamental Theorem of $\Sp$. 

As the symmetric pair of any $\mathcal{R}$ factor cannot belong in $I$, by applying the Bianchi identity and re-ordaining indices we may suppose, without loss of generality, that the skew-symmetric pair of each $\mathcal{R}$ factor belongs in $I$, along with the free $p$ indices. This fills the amount of indices needed in $I$.

The remaining indices (that is, the symmetric pairs of indices of the $\mathcal{R}$ factors) must be contracted with indices of different symmetric pairs, since contracting a symmetric pair of indices with the symplectic form would be null. Hence we obtain a Chern form $c_k$, which is non-zero only if $k$ is even, due to Lemma \ref{LemaChern}.

All that is left is to express this map as an $\Sp$-equivariant map
$$
T:S^{k} \mathcal{R}   \longrightarrow \bigotimes^p T_{x_0}^*X \, ,
$$
by invoking Proposition \ref{ProposicionInvariantes} and applying the polarity isomorphism given by the non-singular 2-form $\omega$. This produces a $p$-form proportional to the one in the statement. \qed

\end{document}